\documentclass[a4paper, 12pt]{article}

\usepackage[utf8]{inputenc}
\usepackage[T1]{fontenc}
\usepackage[a4paper, margin=2.5cm]{geometry}
\usepackage{needspace}

\usepackage{libertine} 
\usepackage{inconsolata} 

\usepackage{amsmath, amsthm, amssymb}
\usepackage{graphicx}
\usepackage[shortlabels]{enumitem}
\usepackage{authblk}
\usepackage{thmtools}
\usepackage{thm-restate}
\usepackage[colorlinks=true, citecolor=red]{hyperref}
\usepackage{float}
\usepackage[]{algorithm2e}
\usepackage{tikz}
\usetikzlibrary{arrows,shapes,automata,backgrounds,petri}


\renewenvironment{abstract}
{\small\vspace{-1em}
\begin{center}
\bfseries\abstractname\vspace{-.5em}\vspace{0pt}
\end{center}
\list{}{
\setlength{\leftmargin}{0.6in}%
\setlength{\rightmargin}{\leftmargin}}%
\item\relax}
{\endlist}

\declaretheorem[name=Theorem, numberwithin=section]{theorem}

\declaretheorem[name=Question, style=remark, sibling=theorem]{question}
\def\cqedsymbol{\ifmmode$\lrcorner$\else{\unskip\nobreak\hfil
\penalty50\hskip1em\null\nobreak\hfil$\lrcorner$
\parfillskip=0pt\finalhyphendemerits=0\endgraf}\fi}


\interfootnotelinepenalty=10000

 %
 %
 %
 %


\def\BZ{\textcolor{black!20!blue}{B}}
\def\RZ{\textcolor{black!20!red}{R}}
\def\SH{\textcolor{black!20!red}{\mathcal{S}_H}}
\def\SG{\textcolor{black!20!blue}{\mathcal{S}_G}}
\def\dB{\textcolor{black!20!blue}{d_H}}
\def\dR{\textcolor{black!20!red}{d_G}}

 %

\let\leq\leqslant
\let\geq\geqslant

\thickmuskip=5mu plus 1mu minus 2mu

\title{A note on deterministic zombies\thanks{V.B. and M.B. are supported by 
the ANR Project GrR (\textsc{ANR-18-CE40-0032}), and L.B. and M.B. are 
supported by the ANR Project DISTANCIA (\textsc{ANR-17-CE40-0015}).}}

\author[1]{Valentin Bartier}
\author[2]{Laurine Bénéteau}
\author[3]{Marthe Bonamy}
\author[4]{Hoang La}
\author[3]{Jonathan Narboni}
\affil[1]{G-SCOP, Univ. Grenoble Alpes, CNRS, Grenoble, France.}
\affil[2]{Aix-Marseille Université, CNRS, Universit\'e de Toulon, LIS 
Marseille, France}
\affil[3]{LaBRI, Université de Bordeaux, CNRS, Bordeaux, France.}
\affil[4]{LIRMM, Université de Montpellier, CNRS, Montpellier, France.}
\date{\today}

\begin{document}

\maketitle

\begin{abstract}
``Zombies and Survivor'' is a variant of the well-studied game of ``Cops and 
Robber'' where the zombies (cops) can only move closer to the survivor 
(robber). We consider the deterministic version of the game where a zombie can 
choose their path if multiple options are available. The zombie number, like 
the cop number, of a graph is the minimum number of zombies, or cops, required 
to capture the survivor. In this short note, we solve a question by Fitzpatrick 
et al., proving that the zombie number of the Cartesian product of two graphs 
is at most the sum of their zombie numbers. We also give a simple graph family 
with cop number $2$ and an arbitrarily large zombie number.
\end{abstract}


\section{Introduction}\label{sec:intro}

We consider a pursuit-and-evasion game defined 
in~\cite{fitzpatrick2016deterministic} as follows: ``Zombies and Survivors is a 
new variant of the well-studied game of Cops and Robber, in which zombies take 
the place of the cops and survivor take the place of the robber. The zombies, 
being of limited intelligence, have a very simple objective in each round -- to 
move closer to a survivor. Therefore, each zombie must move along some shortest 
path, or geodesic, joining itself and a nearest survivor. We say that the 
zombies capture a survivor if one of the zombies moves onto the same vertex as 
a survivor.'' In this version, zombies may have a choice as to which shortest 
path to follow, if there are multiple ones. A different version of the game 
involves randomness in the choice of the shortest path. We refer the interested 
reader to~\cite{bonato2016probabilistic,pralat2019many} (more generally 
to~\cite{bonato2017graph} for a nice survey around cops and robbers) and do not 
consider the topic further.

Following~\cite{fitzpatrick2016deterministic}, we only consider the case of a 
unique survivor in the graph, and assume all graphs throughout the paper to be 
connected. We denote by $z(G)$ the minimum number of zombies to place around a 
graph $G$ so as to ensure that the survivor will eventually be captured. 
Similarly, we denote by $c(G)$ the minimum number of cops to place around a 
graph $G$ so as to ensure that the robber will eventually be captured.

We focus on the following two questions:

\begin{question}[Question 10 
in~\cite{fitzpatrick2016deterministic}]\label{qu:cartesian}
Is $z(G \square H)\leq z(G)+z(H)$ for all graphs $G$ and $H$?
\end{question}

\begin{question}[Question 19 
in~\cite{fitzpatrick2016deterministic}]\label{qu:largeratio}
Over all graphs $G$, how large can the ratio $\frac{z(G)}{c(G)}$ be?
\end{question}

Here, we answer Question~\ref{qu:cartesian} in the affirmative, improving upon 
Theorems 11, 13 and 14 in~\cite{fitzpatrick2016deterministic}. By noting that 
$z(Q_3)=2$, we also obtain immediately that $z(Q_n)=\lceil \frac{2n}3 \rceil$. 
This was the object of Conjecture 18 in~\cite{fitzpatrick2016deterministic}, 
though it was since solved independently in~\cite{offner2019comparing} 
and~\cite{fitzpatrick2018game}.

\begin{theorem}\label{th:cartproduct}
For all graphs $G$ and $H$, we have $z(G \square H)\leq z(G)+z(H)$.
\end{theorem}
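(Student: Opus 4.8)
The plan is to build a winning configuration for $z(G)+z(H)$ zombies on $G\square H$ out of winning configurations on $G$ and on $H$, exploiting that in a Cartesian product distances add: $d_{G\square H}\big((g,h),(g',h')\big)=d_G(g,g')+d_H(h,h')$. The first thing I would record is what this means for one zombie: if a zombie sits at $(g,h)$ and the unique survivor at $(s_G,s_H)$, then every geodesic from $(g,h)$ to $(s_G,s_H)$ is an interleaving of a $G$-geodesic from $g$ to $s_G$ with an $H$-geodesic from $h$ to $s_H$; hence on its turn the zombie may freely choose to take one step of such a $G$-geodesic (if $g\neq s_G$) or one step of such an $H$-geodesic (if $h\neq s_H$), and if it already agrees with the survivor in one coordinate it is forced to step in the other. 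So a zombie on $G\square H$ behaves like a pair consisting of a zombie on $G$ chasing $s_G$ and a zombie on $H$ chasing $s_H$ sharing a single clock: exactly one of the two moves each round, and a coordinate that has caught up cedes its turns to the other.

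With this in hand I would split the $z(G)+z(H)$ zombies into a \emph{$G$-team} of size $z(G)$ and an \emph{$H$-team} of size $z(H)$, and fix winning strategies $\sigma_G,\sigma_H$ for $z(G),z(H)$ zombies on $G,H$; since a survivor is always allowed to stay put, these strategies still win against a survivor that may pass, which is exactly what is needed, because the survivor's first coordinate $s_G$ evolves on $G$ like a possibly-passing survivor (it moves precisely when the real survivor moves in the first coordinate) and likewise $s_H$ on $H$. The $G$-team plays $\sigma_G$ in the first coordinate — each $G$-team zombie not yet agreeing with the survivor there advances as $\sigma_G$ dictates, always a legal move — and symmetrically the $H$-team plays $\sigma_H$ in the second. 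After finitely many rounds $\sigma_G$ forces some $G$-team zombie $Z^\star$ to reach the survivor's first coordinate, and thereafter $Z^\star$ can keep that agreement \emph{forever}: when the survivor moves in the first coordinate $Z^\star$ copies that single step (its only legal first-coordinate move), and on every other round it is forced to — and does — advance its second coordinate toward $s_H$. Symmetrically the $H$-team eventually produces $W^\star$ permanently agreeing with the survivor in the second coordinate.

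To finish I would turn ``one zombie locked on each coordinate'' into an actual capture, i.e.\ produce a zombie agreeing with the survivor in \emph{both} coordinates. The target configuration is a full team of $z(H)$ zombies sitting in the moving slice $\{s_G\}\times V(H)$: such a team can stay in the slice forever (a zombie with first coordinate $s_G$ can always match the survivor's single first-coordinate step), and while there it plays $\sigma_H$ in the second coordinate, spending the rounds on which the survivor moves in the first coordinate merely re-matching the slice, so that by the winning property of $\sigma_H$ it corners $s_H$; since every member already agrees in the first coordinate, this is a capture. The mechanism for getting a team into the slice is to have it \emph{ride along} with the $G$-team: a zombie may copy another zombie's first-coordinate move verbatim, since that move is a geodesic step toward the common target $s_G$, so a zombie that starts with the same first coordinate as a given $G$-team zombie and copies its first-coordinate moves is carried onto the slice exactly when that $G$-team zombie catches the first-coordinate shadow.

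The hard part — and what I expect to be the real content of the proof — is precisely this last coordination. First, $\sigma_G$ does not announce in advance which of its zombies lands on the shadow, so I would either show that a winning zombie strategy can be chosen in which a prescribed zombie makes the capture, or instead iterate: once one zombie is locked on the first-coordinate shadow, the remaining $z(G)+z(H)-1\ge z(G)$ zombies can re-run $\sigma_G$ to drive a further zombie onto the slice, repeating until $z(H)$ of them sit there. Second, and more seriously, a survivor can try to \emph{stall} by moving in an already-locked coordinate forever, forcing the zombies committed to that slice to waste their turns re-matching; ruling this out — for instance by keeping the $G$-team actively chasing in the first coordinate so that such running is punished, or by arguing that a survivor fleeing in the first coordinate freezes its second coordinate and is then swept up by the teams converging there — is the crux, and is where having the full budget of $z(G)+z(H)$ zombies is essential.
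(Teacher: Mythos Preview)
Your proposal has a genuine gap, and you name it yourself: the stalling problem in your last paragraph is left unresolved. Once your $G$-team produces a zombie $Z^\star$ locked on the first coordinate, you still need an entire team of $z(H)$ zombies inside the moving slice $\{s_G\}\times V(H)$ to finish, and neither of your proposed mechanisms works as stated. Re-running $\sigma_G$ with the remaining zombies is not available: $\sigma_G$ comes with prescribed starting positions, and after the first run your zombies are no longer there and cannot return. ``Riding along'' requires each rider to share a $G$-team zombie's first coordinate from the very first round, which you have not arranged; and even if you did, those riders' second coordinates would sit idle exactly when you need them to be executing $\sigma_H$.

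The paper's proof removes the difficulty by reversing the order of the two tasks you assign to the $G$-team. Its team of $z(G)$ zombies is placed in a single copy of $G$ (all sharing one $H$-coordinate) and, \emph{before} touching $\sigma_G$, moves in lockstep along $H$ until that common $H$-coordinate equals $s_H$. From then on the whole team stays in the survivor's $G$-slice forever---copying any $H$-move the survivor makes---so that when $\sigma_G$ finally lands a zombie on $s_G$, that zombie already has second coordinate $s_H$ and the capture is real, not merely a first-coordinate match. Stalling is ruled out because the two teams interlock: on a round where $s_H$ is fixed, the $G$-team either shrinks its $H$-distance to the survivor or advances one step of $\sigma_G$; on a round where $s_G$ is fixed, the $H$-team makes the symmetric progress. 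Since the survivor changes at most one coordinate per round, some team advances every round and the game terminates. No designated capturer, no iteration, and no coordination between the teams is needed.
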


We also argue that the ratio in Question~\ref{qu:largeratio} can be arbitrarily 
large. This was already argued in~\cite{offner2019comparing}, but our 
construction and proof are arguably simpler. Additionally, the graphs we 
present are outerplanar graphs, and in fact cacti. Informally, this gives 
little hope for Question~\ref{qu:largeratio} to have a bounded answer in a 
meaningful graph class.

\begin{theorem}\label{th:petals}
For every integer $k$, there is a graph $G_k$ that is a union of cycles sharing 
a vertex such that $z(G_k)\geq k$.
\end{theorem}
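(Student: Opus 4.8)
The construction: $G_k$ is a "flower" — a bunch of cycles all sharing a single common vertex $v$. The idea is that each cycle ("petal") can only be guarded/traversed by a zombie that enters it, and a zombie chasing along shortest paths in such a flower is very constrained. I want to show that with fewer than $k$ zombies, the survivor can always escape.

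Key intuition: From the hub $v$, every vertex on a petal has a well-defined distance, and a petal of length $\ell$ has a "far point" (antipodal-ish vertex) at distance $\lfloor \ell/2 \rfloor$ from $v$. Crucially, zombies move *only* along shortest paths to the survivor. If the survivor sits deep inside one petal, a zombie sitting inside a *different* petal must first walk back to the hub $v$ (its unique shortest route toward the survivor), then into the survivor's petal. So a zombie can only ever threaten the survivor from "one side." On a single sufficiently long cycle, one zombie approaching from one side cannot win — the survivor simply walks around; this is the standard fact that a long cycle has cop number... wait, cycles have cop number $1$, but the *zombie* constraint changes things: a zombie on a cycle is forced to take the shorter arc, and the survivor, by moving appropriately, can keep the zombie on the shorter-arc side indefinitely, sidestepping past $v$ when the zombie is forced to commit. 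I would formalize: a single zombie never captures a survivor who is free to use the hub, provided the cycles are long enough, because whenever the zombie closes in, the survivor flips to the other arc through $v$.

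The main argument, then, is an invariant / potential argument. Choose $G_k$ to have, say, $k$ petals each of length (circumference) large compared to $k$ — length $\Theta(k)$ suffices, maybe $2k+1$ or so, to be pinned down. Suppose only $k-1$ zombies play. I would track, at each round, the "active" petal containing the survivor. Show that the survivor can maintain the invariant: at the start of each phase, the survivor is at the far point of some petal $P$, and the nearest zombie is at distance $> $ (some threshold, like half the petal length) — because at most $k-1$ petals can be "occupied or approached" and there are $k$ petals, so there is always a petal the survivor can flee into (through $v$) that is currently zombie-free deep inside. When zombies converge on the active petal, the survivor walks through $v$ into a fresh petal before any zombie can reach $v$. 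The counting "$k$ petals vs $k-1$ zombies" is what forces a fresh petal to exist; the length bound is what guarantees the survivor has time to reach $v$ and get deep into the new petal before being cornered. One must check the shortest-path/geodesic rule carefully: when the survivor is at the hub or near it, a zombie may have two shortest paths (around either arc of its own petal, or into different petals) and the adversary (us, controlling zombies) picks the worst one — so the survivor's escape must work against *all* zombie choices, i.e., the survivor should only rely on the fact that a zombie in petal $P'\ne P$ is at distance $\geq \operatorname{dist}(\text{zombie},v)+\operatorname{dist}(v,\text{survivor})$, which holds regardless of the zombie's path choice.

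The step I expect to be the main obstacle is making the escape-through-the-hub maneuver airtight against *all* zombies simultaneously, including ones already sitting inside the survivor's current petal on both sides, and against zombies parked exactly at the hub $v$. A zombie at $v$ is the dangerous case: it can follow the survivor into whichever petal the survivor picks. So the real content is: the survivor must choose, at the moment of fleeing, a petal such that (i) it is deep-zombie-free and (ii) no zombie is poised at $v$ ready to chase — and if a zombie *is* at $v$, the survivor treats that zombie as "the one zombie on my escape cycle" and uses the long-cycle evasion to perpetually dodge it, while the remaining $k-2$ zombies can cover at most $k-2 < k-1$ of the other petals. I would set up the potential function to measure, roughly, distance from the survivor to the nearest zombie that is "on the survivor's side of $v$," prove it can be kept above a fixed positive constant forever, and conclude no capture occurs, hence $z(G_k) \ge k$. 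Pinning down the exact required circumference (I expect something like $4k$ to be safely enough) and verifying the base maneuver round-by-round is the routine-but-delicate part.
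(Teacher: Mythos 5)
Your plan has a gap that I believe is fatal rather than ``routine-but-delicate'': the invariant you propose --- keep the nearest zombie at distance above some threshold, re-establishing it by fleeing through the hub into a fresh petal --- cannot be maintained, because the distance from any fixed zombie to the survivor \emph{never increases} during the game. Each round a zombie moves one step along a geodesic to the survivor, dropping that distance by exactly $1$, and the survivor's reply raises it by at most $1$; so $d(z,s)$ is non-increasing for every zombie $z$, and no escape manoeuvre through $v$, however well timed, resets anything. Moreover a zombie sitting in a foreign petal actually gains on the survivor at a linear rate, since the survivor spends half of every circuit walking towards $v$; so every zombie inevitably gets close, and the only real question is from which side it arrives. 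That question --- what happens when a zombie reaches $v$ at a moment when the survivor is in the far half of its current petal, so that the zombie's unique geodesic sends it head-on and the survivor is squeezed between it and the zombies already trailing behind --- is precisely the part you defer to verification, and with $k$ equal-length petals the survivor has no slack to control the timing: it cannot pause (the trailers close in) and it cannot undo a head-on approach once it occurs. Your refined potential (``distance to the nearest zombie on the survivor's side of $v$'') is the right quantity to protect, but the fleeing mechanism does not protect it.

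The paper's proof runs in the opposite direction: the survivor deliberately \emph{gathers} the zombies close behind, so that they all trail clockwise on the short arc at bounded distance and then follow forever without gaining. To keep ``behind'' stable, the cycles have exponentially growing lengths $5,13,\dots,2^{k+2}-3$: at the end of phase $i$ the $i$ nearest zombies are within distance $2^{i+2}-2$, which is less than half the length of the next cycle, so their geodesics still point clockwise after the survivor moves on, while the remaining zombies (at distance at least $2^{i+2}-1$) cannot reach the hub within one circuit. Finally, the construction takes $k$ disjoint copies of the whole bouquet so that the pigeonhole principle guarantees a copy with no zombie on it for the survivor to start in --- a redundancy your $k$-petal construction lacks. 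If you wish to salvage your construction, you would need to replace the ``keep them far'' invariant by a ``keep them all behind'' invariant and then explain why equal-length petals leave enough room for the growing trailing pack; I do not see how they can, which is presumably why the paper's cycle lengths grow geometrically.
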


We prove Theorem~\ref{th:cartproduct} in Section~\ref{sec:fewaszombias}, 
Theorem~\ref{th:petals} in Section~\ref{sec:muchaszombias}, and conclude in 
Section~\ref{sec:ccl} with some open problems which seem of interest to us.

\section{Cartesian products of graphs}\label{sec:fewaszombias}

\begin{proof}[Proof of Theorem~\ref{th:cartproduct}]

Given a vertex $u \in G \square H$, we denote its coordinates in $G$ and $H$ as 
$(u_G,u_H)$. Given two vertices $u,v$ in $G\square H$, we denote 
$d_G(u,v)=d_G(u_G,v_G)$ (resp. $d_H(u,v)=d_H(u_H,v_H)$) the distance between 
$u$ and $v$ in the projection of $G\square H$ on $G$ (resp. $H$). A \emph{copy} 
of $G$ (resp. $H$) is the subgraph induced in $G \square H$ by all vertices $u$ 
with $u_H=w$ (resp. $u_G=x$) where $w$ is some vertex in $H$ (resp. $x$ is some 
vertex in $G$). Let $\SG$ be an optimal strategy for $z(G)$ zombies in $G$, and 
$\SH$ be an optimal strategy for $z(H)$ zombies in $H$. Throughout the proof, 
we denote by $s$ the vertex where the survivor lies.

We are now ready to describe a winning strategy (for zombies) involving 
$z(G)+z(H)$ zombies. We will distinguish two types of zombies: a set $\BZ$ of 
$z(G)$ \textcolor{black!20!blue}{blue} zombies, which are placed according to 
$\SG$ in some copy of $G$, and a set $\RZ$ of $z(H)$ 
\textcolor{black!20!red}{red} zombies, which are placed according to $\SH$ in 
some copy of $H$. Note that for every $x,y \in \BZ$, we have 
$d_H(x,s)=d_H(y,s)$. We maintain that property step after step, and denote the 
corresponding value $\dB$. Similarly, for every $x,y \in \RZ$, we have 
$d_G(x,s)=d_G(y,s)$: we denote that value $\dR$.

The set $\BZ$ applies the following strategy: as long as $\dB$ is positive, all 
the zombies in $\BZ$ move towards $s$ in $H$ (choosing to keep the same 
coordinate in $G$). Note that this is a valid move, as there is a shortest path 
to $s$ going through the corresponding vertex. Once $\dB=0$, all zombies in 
$\BZ$ either follow $\SG$ (if $s_H$ is unchanged) or move toward $s$ in $H$ to 
remain in the same copy of $G$ as $s$ (if $s_H$ changed). Note that either way, 
we maintain $\dB=0$. The set $\RZ$ applies the same strategy, symmetrically 
with $H$ and $G$ instead of $G$ and $H$. 

We observe that neither $\dB$ nor $\dR$ increases. Additionally, at every step, 
either $s_H$ or $s_G$ is unchanged. Assume $s_H$ is unchanged. Then $\dB$, if 
positive, decreases. If $\dB=0$, then all zombies in $\BZ$ follows $\SG$. Since 
$\dB=0$ for the rest of the game, $\BZ$ is one step closer to catching the 
survivor. Meanwhile, if $s_H$ is changed, then $\dB$ does not change, and $\BZ$ 
is not further away from capturing the survivor according to $\SG$. Since the 
winning strategy $\SG$ terminates in a finite number of steps, and the same 
analysis holds for $\SH$, the process for $G \square H$ terminates and the 
survivor is eventually captured and eaten. 

\end{proof}

\vspace{-0.9cm}

\section{Following the Busan strategy}\label{sec:muchaszombias}

\begin{proof}[Proof of Theorem~\ref{th:petals}]
For $k \in \mathbb{N}^*$, let $G_{k}$ be the graph obtained by taking $k$ 
disjoint copies of $C_5,C_{13},\ldots,C_{2^{k+2}-3}$, for a total of $k^2$ 
cycles, and merging all of them on one vertex $u$ (see 
Figure~\ref{fig:cycles}). Note that $|V(G_k)|\sim k\cdot 2^{k+3}$ as $k 
\rightarrow \infty$. We will argue that $z(G_k)\geq k$. We define a direction 
for all cycles, which we will refer to as \emph{clockwise}.

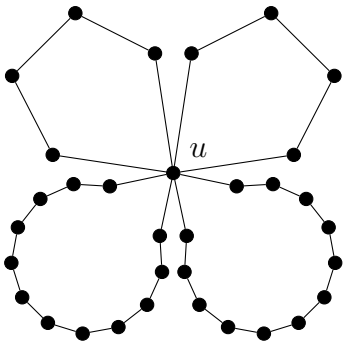
\begin{figure}[!h]
\center
\begin{tikzpicture}
    \tikzstyle{whitenode}=[draw,circle,fill=white,minimum size=9pt,inner 
    sep=0pt]
    \tikzstyle{blacknode}=[draw,circle,fill=black,minimum size=5pt,inner 
    sep=0pt]
    \tikzstyle{rednode}=[draw,circle,fill=red,minimum size=9pt,inner sep=0pt]
    \tikzstyle{greennode}=[draw,circle,fill=black!20!green,minimum 
    size=9pt,inner sep=0pt]

\draw (0,0) node[blacknode][label=45:$u$] (a) {};
\draw (a) ++(45:1.6cm) node (r1) {};
\draw (a) ++(135:1.6cm) node (r2) {};
\draw (a) ++(-135:1.6cm) node (r3) {};

\draw (r1) ++(-135:1cm) node (a1) {};
\draw (r1) ++(-135+72:1cm) node[blacknode] (b1) {};
\draw (r1) ++(-135+2*72:1cm) node[blacknode] (c1) {};
\draw (r1) ++(-135+3*72:1cm) node[blacknode] (d1) {};
\draw (r1) ++(-135+4*72:1cm) node[blacknode] (e1) {};

\draw (a) -- (b1) -- (c1) -- (d1) -- (e1) -- (a);

\draw (r2) ++(-45:1cm) node (a2) {};
\draw (r2) ++(-45+72:1cm) node[blacknode] (b2) {};
\draw (r2) ++(-45+2*72:1cm) node[blacknode] (c2) {};
\draw (r2) ++(-45+3*72:1cm) node[blacknode] (d2) {};
\draw (r2) ++(-45+4*72:1cm) node[blacknode] (e2) {};

\draw (a) -- (b2) -- (c2) -- (d2) -- (e2) -- (a);

\draw (r3) ++(45:1cm) node (a3) {};
\draw (r3) ++(45+27.6923:1cm) node[blacknode] (b3) {};
\draw (r3) ++(45+2*27.6923:1cm) node[blacknode] (c3) {};
\draw (r3) ++(45+3*27.6923:1cm) node[blacknode] (d3) {};
\draw (r3) ++(45+4*27.6923:1cm) node[blacknode] (e3) {};
\draw (r3) ++(45+5*27.6923:1cm) node[blacknode] (f3) {};
\draw (r3) ++(45+6*27.6923:1cm) node[blacknode] (g3) {};
\draw (r3) ++(45+7*27.6923:1cm) node[blacknode] (h3) {};
\draw (r3) ++(45+8*27.6923:1cm) node[blacknode] (i3) {};
\draw (r3) ++(45+9*27.6923:1cm) node[blacknode] (j3) {};
\draw (r3) ++(45+10*27.6923:1cm) node[blacknode] (k3) {};
\draw (r3) ++(45+11*27.6923:1cm) node[blacknode] (l3) {};
\draw (r3) ++(45+12*27.6923:1cm) node[blacknode] (m3) {};

\draw (a) -- (b3) -- (c3) -- (d3) -- (e3) -- (f3) -- (g3) -- (h3) -- (i3) -- 
(j3) -- (k3) -- (l3) -- (m3) -- (a);

\draw (a) ++(-45:1.6cm) node (r3) {};

\draw (r3) ++(135:1cm) node (a3) {};
\draw (r3) ++(135+27.6923:1cm) node[blacknode] (b3) {};
\draw (r3) ++(135+2*27.6923:1cm) node[blacknode] (c3) {};
\draw (r3) ++(135+3*27.6923:1cm) node[blacknode] (d3) {};
\draw (r3) ++(135+4*27.6923:1cm) node[blacknode] (e3) {};
\draw (r3) ++(135+5*27.6923:1cm) node[blacknode] (f3) {};
\draw (r3) ++(135+6*27.6923:1cm) node[blacknode] (g3) {};
\draw (r3) ++(135+7*27.6923:1cm) node[blacknode] (h3) {};
\draw (r3) ++(135+8*27.6923:1cm) node[blacknode] (i3) {};
\draw (r3) ++(135+9*27.6923:1cm) node[blacknode] (j3) {};
\draw (r3) ++(135+10*27.6923:1cm) node[blacknode] (k3) {};
\draw (r3) ++(135+11*27.6923:1cm) node[blacknode] (l3) {};
\draw (r3) ++(135+12*27.6923:1cm) node[blacknode] (m3) {};

\draw (a) -- (b3) -- (c3) -- (d3) -- (e3) -- (f3) -- (g3) -- (h3) -- (i3) -- 
(j3) -- (k3) -- (l3) -- (m3) -- (a);

\end{tikzpicture}\caption{The graph $G_2$}\label{fig:cycles}
\end{figure}

Assume for a contradiction that $z(G_k)\leq k-1$, and let $z_1,\ldots,z_{k-1}$ 
be an initial position of zombies in $G_k$ for a winning strategy. Since there 
are $k$ copies of cycles $C_5,C_{13},\ldots,C_{2^{k+2}-3}$, and only $k-1$ 
zombies, by the pigeon-hole principle there is one copy which contains no $z_i$ 
except possibly for $u$. We will focus on $u$ and the vertices of that copy, 
and ignore from now on the rest of the graph. The goal, perhaps somewhat 
counter-intuitively, is to gather zombies closely behind the survivor, so that 
eventually the survivor can safely circle around the cycle of length 
$2^{k+2}-3$ forever without encountering any zombie. \emph{Circling around} a 
cycle means walking around the cycle clockwise until reaching $u$.

\begin{algorithm}[H]\label{alg:surviving}
\For{$i$ from $1$ to $k$}{
 \While{the $i^\textrm{th}$ closest zombie is at distance at least $2^{i+2}-1$}{
 if the survivor has not chosen a starting point yet, choose the second vertex 
 in the cycle of length $2^{i+2}-3$\;
  circle around the cycle of length $2^{i+2}-3$\;
 }}
 \caption{A winning strategy for the survivor}
\end{algorithm}

The strategy for the survivor is elementary (see 
Algorithm~\ref{alg:surviving}). By circling around in an appropriate way, the 
survivor makes sure that at some point, the first $i$ zombies are within 
distance $2^{i+2}-2$ behind. Since there are only $k-1$ zombies, this 
guarantees that circling around the cycle of length $2^{k+2}-3$ is eventually 
safe and leads to a surviving strategy for the survivor. The only crucial 
property about the behaviour of zombies is that the distance between the 
survivor and a given zombie never increases. Note that since all cycles are 
odd, free will has in fact no impact for zombies.

In Algorithm~\ref{alg:surviving}, zombies are ranked by increasing distance to 
the survivor, with ties broken arbitrarily. The $k^\textrm{th}$ zombie, which 
does not exist, is considered to be at infinite distance. When the survivor has 
not chosen a starting point yet, distance is considered as distance to $u$ 
(which might not be a suitable starting point as there could be a zombie on it).

To argue that Algorithm~\ref{alg:surviving} is safe for the survivor, it 
suffices to point out that when the survivor enters the cycle of length 
$2^{i+2}-3$ (for some $i$), all zombies are either at distance at most 
$2^{i+1}-2$ or at least $2^{i+2}-1$. In the first case, the shortest path to 
the survivors makes them circle around the cycle clockwise (since 
$2^{i+1}-2<\frac{2^{i+2}-3}2$). In the second case, they do not reach $u$ 
before the survivor has finished circling around the cycle (since 
$2^{i+2}-1>2^{i+2}-3+1$).

\end{proof}

\vspace{-0.9cm}


\section{Conclusion}\label{sec:ccl}

To conclude, we offer two open questions. While not of obvious depth, we 
believe that both touch at the heart of what it means for a graph $G$ to 
require $z(G)$ zombies. In particular, if a survivor plays so as to survive for 
as long as possible, are all $z(G)$ zombies within short distance at time of 
death?

\begin{question}\label{qu:poiluszombius}
For every graph $G$, and for a graph $G'$ obtained from $G$ by successively 
adding vertices of degree $1$, does it always hold that $z(G')= z(G)$?
\end{question}

Question~\ref{qu:poiluszombius} can be interpreted as: is there any advantage 
for zombies to individually wait for some pre-announced time at the beginning 
of the game (and then activate and follow the standard rules)?

\begin{question}\label{qu:petaluszombius}
For any graph $G$, is there an integer $k$ such that, for $G'_k$ the graph 
obtained from $G$ by subdividing all edges $k$ times then adding the original 
edges, $z(G'_k)\geq z(G)+1$?
\end{question}

As far as we can tell, it could be that $5$ is a valid answer to 
Question~\ref{qu:petaluszombius} in any graph.

\paragraph*{Addendum.}

After submission of this manuscript, an independent proof of 
Theorem~\ref{th:cartproduct} was published~\cite{keramatipour2021zombie}.

\paragraph*{Acknowledgements.}

We gratefully acknowledge support from the Simon family for the organisation of 
the $6^{\textrm{th}}$ Pessac Graph Workshop, where this research was done. Last 
but not least, we thank Peppie for her unwavering support during the work 
sessions.

\bibliographystyle{alpha}
\bibliography{zombies}

\end{document}